\title[$n$-ary algebras -- calculations and
      conjectures]{Operads for $\hbox{\Large
      $n$}$-ary algebras -- calculations and conjectures}
\author[Markl - Remm]{Martin Markl and Elisabeth Remm}
\thanks{The first author was supported by the grant GA \v CR
                  201/08/0397 and by
   the Academy of Sciences of the Czech Republic,
   Institutional Research Plan No.~AV0Z10190503.}
\address{Mathematical Institute of the Academy, {\v Z}itn{\'a} 25,
         115 67 Prague 1, The Czech Republic}
\email{markl@math.cas.cz}
\subjclass[2000]{18D50, 55P48}
\keywords{Operad, Koszulity, minimal model}
\address{Laboratoire de Math\'ematiques et Applications,
        Universit\'e de Haute Alsace, Facult\'e des Sciences et
        Techniques, 4, rue des Fr\`eres Lumi\`ere,
        68093~Mulhouse~cedex, France.}
\email{Elisabeth.Remm@uha.fr}
\begin{document}
\baselineskip18pt plus 1pt minus 1pt
\parskip3pt plus 1pt minus .5pt

\def\u{\underline}
\def\miska{
\unitlength 1.2mm 
\begin{picture}(4.2,9)(0,0)
\thicklines
\put(-8,-8){
\put(8,9){\line(0,-1){1}}
\put(8,8){\line(1,0){4}}
\put(12,8){\line(0,1){1}}
}
\end{picture}
}
\def\miskaot{
\unitlength 1.2mm 
\begin{picture}(4.2,9)(0,0)
\thicklines
\put(-8,-8){\put(9,9){?}
\put(8,9){\line(0,-1){1}}
\put(8,8){\line(1,0){4}}
\put(12,8){\line(0,1){1}}
}
\end{picture}
}
\def\p[#1]{\langle #1 \rangle}\def\t#1{\hskip .2em t^{#1}}
\def\exepttree{\hskip .5em\raisebox{-.2em}{\rule{.8pt}{1.1em}}  \hskip .5em}
\def\card{{\rm card}}\def\calC{{\mathcal{C}}}\def\rada#1#2{#1,\ldots,#2}
\def\Rada#1#2#3{#1_{#2},\dots,#1_{#3}}\def\Dbar{{\sf D}}
\def\epi{ \twoheadrightarrow}\def\Sree#1{{\EuScript{S}^n_{#1}}}
\def\Sreep#1{{\EuScript{S}^3_{#1}}}
\def\Free{{\pAss^n_d(V)}}\def\Tree#1{{\EuScript{T}^n_{#1}}}
\def\q{{\hbox{$\bullet$}}}\def\qq{{\hbox{\small$\bullet$}}}
\def\dst{\delta_{\rm st}}\def\Cst#1{{C_{\sstildeAss}^{#1}(A;A)_{\rm st}}}
\def\PCst#1{{C_{\calP}^{#1}(A;A)_{\rm st}}}\def\sspAss{{p\ssAss}}
\def\PC#1{{C_{\calP}^{#1}(A;A)}}\def\sstAss{{t\ssAss}}
\def\C#1{{C^{#1}_{\sstildeAss}(A;A)}}
\def\scrR{{\mathscr R}}\def\pa{{\partial}}\def\Hom{{\it Hom\/}}
\def\H#1{{H_{\sstildeAss}^{#1}(A;A)}}
\def\Hst#1{{H_{\sstildeAss}^{#1}(A;A)_{\rm st}}}
\def\PHst#1{{H_{\calP}^{#1}(A;A)_{\rm st}}}
\def\PH#1{{H_{\calP}^{#1}(A;A)}}
\def\bbbC{{\mathbb C}}\def\frakZ{{\mathfrak Z}}\def\bbbR{{\mathbb R}}
\def\martin{\noindent{\bf Martin:\ }}
\def\endmartin{ \hfill\rule{10mm}{.75mm} \break}
\def\otexp#1#2{{#1}^{\otimes #2}}\def\ot{\otimes}
\def\ssAss{\hbox{\normalsize $\mathcal{A}\it ss$}}
\def\Ass{\hbox{$\mathcal{A}\it ss$}}
\def\bfk{{\mathbf k}}\def\tAss{{t\Ass}}
\def\tildeAss{\widetilde{\Ass}}
\def\sstildeAss{\widetilde{\ssAss}}\def\ssttildeAss{t\widetilde{\ssAss}{}}
\def\id{\hbox{$1 \hskip -.3em 1$}}\def\ssptildeAss{p\widetilde{\ssAss}{}}
\def\calP{{\mathcal P}}\def\Span{{\it Span}}\def\pAss{{p\Ass}}
\def\ss{{\mathbf s}}\def\Associative{\Ass}\def\osusp{\hbox{\bf s\hskip .1em}}
\def\ttildeAss{{t\tildeAss}{}}\def\ptildeAss{{p\tildeAss}{}}
\def\ssosusp{\hbox{\scriptsize\bf s\hskip .1em}}
\def\gl#1{
{
\unitlength=.5pt
\begin{picture}(60.00,0.00)(0.00,0.00)
\thicklines
\put(2.00,-32.00){\makebox(0.00,0.00)[b]{\scriptsize $#1$}}
\put(0.00,0.00){\line(-1,-2){17.00}}
\put(0.00,0.00){\line(1,-2){17.00}}
\put(-18,-35.00){\line(1,0){35.00}}
\put(-15,-35.00){\line(0,-1){8.00}}
\put(-11,-35.00){\line(0,-1){8.00}}
\put(15,-35.00){\line(0,-1){8.00}}
\put(3,-37.00){\makebox(0.00,0.00)[t]{\scriptsize $\cdots$}}
\end{picture}}
}

\def\cases#1#2#3#4{
                  \left\{
                         \begin{array}{ll}
                           #1,\ &\mbox{#2}
                           \\
                           #3,\ &\mbox{#4}
                          \end{array}
                   \right.
}

\def\muu{
\hskip .2em
\thicklines
{
\unitlength=.2pt
\begin{picture}(60.00,20.00)(0.00,10.00)
\put(30.00,30.00){\makebox(0.00,0.00){$\bullet$}}
\put(30.00,30.00){\line(1,-1){30.00}}
\put(0.00,0.00){\line(1,1){30.00}}
\put(30.00,30.00){\line(0,1){35}}
\end{picture}}
\hskip .2em
}

\def\ZZbbZbZ{
\thicklines
{
\unitlength=.3pt
\begin{picture}(80.00,40.00)(0.00,40.00)
\put(40.00,40.00){\makebox(0.00,0.00){$\bullet$}}
\put(20.00,20.00){\makebox(0.00,0.00){$\bullet$}}
\put(40.00,0.00){\line(-1,1){20.00}}
\put(40.00,40.00){\line(0,1){40.00}}
\put(80.00,0.00){\line(-1,1){40.00}}
\put(0.00,0.00){\line(1,1){40.00}}
\end{picture}}
\raisebox{-02pt}{\rule{0pt}{2pt}}
}

\def\ZbZbbZZ{
\thicklines
{
\unitlength=.3pt
\begin{picture}(80.00,40.00)(0.00,40.00)
\put(40.00,40.00){\makebox(0.00,0.00){$\bullet$}}
\put(60.00,20.00){\makebox(0.00,0.00){$\bullet$}}
\put(40.00,0.00){\line(1,1){20.00}}
\put(40.00,40.00){\line(0,1){40.00}}
\put(80.00,0.00){\line(-1,1){40.00}}
\put(0.00,0.00){\line(1,1){40.00}}
\end{picture}}
\raisebox{-20pt}{\rule{0pt}{2pt}}
}

\swapnumbers
\newtheorem{theorem}{Theorem}[section]
\newtheorem{corollary}[theorem]{Corollary}
\newtheorem{observation}[theorem]{Observation}
\newtheorem{lemma}[theorem]{Lemma}
\newtheorem{proposition}[theorem]{Proposition}
\newtheorem{problem}[theorem]{Problem}
\newtheorem{conjecture}[theorem]{Conjecture}
\newtheorem{odstavec}{\hskip -0.1mm}[section]
\newtheorem*{principle}{Principle}
\newtheorem*{itt}{Invariant Tensor Theorem}

\theoremstyle{definition}
\newtheorem{example}[theorem]{Example}
\newtheorem{remark}[theorem]{Remark}
\newtheorem{definition}[theorem]{Definition}

\newtheorem*{conjectureA}{Conjecture A}
\newtheorem*{conjectureAp}{Conjecture A'}
\newtheorem*{conjectureApp}{Conjecture A''}
\newtheorem*{conjectureB}{Conjecture B}
\newtheorem*{sublemma}{Sublemma}
\newtheorem*{fact}{Fact}

\def\u{\relax}

\pagestyle{myheadings}

\begin{abstract}
In~\cite{markl-remm} we studied Koszulity of a family $\tAss^n_d$ of
operads depending on a natural number $n \in \mathbb N$ and on the
degree $d \in \mathbb Z$ of the generating operation.  While we proved
that, for $n \le 7$, the operad $\tAss^n_d$ is Koszul if and only if
$d$ is even, and while it follows from \cite{hoffbeck} that
$\tAss^n_d$ is Koszul for $d$ even and arbitrary $n$, the
(non)Koszulity of $\tAss^n_d$ for $d$ odd and $n \geq 8$ remains an
open problem.  In this note we describe some related numerical
experiments, and formulate a conjecture suggested by the results of
these computations.
\end{abstract}

\maketitle

\bibliographystyle{plain}

\def\job{\vfill
         \hfill{\tt \jobname.tex}}
\def\zvetseni{\Large}

\section{Introduction}

All algebraic objects will be considered over a ground field ${\mathbf
k}$ of characteristic zero. In particular, the symbol $\otimes$ will
denote the tensor product over ${\mathbf k}$.  We assume some
familiarity with operad theory, namely with Koszul duality for
quadratic operads and their Koszulity, see for 
instance~\cite[Chapter~II.3]{markl-shnider-stasheff:book} or the original
sources \cite{getzler-jones:preprint,ginzburg-kapranov:DMJ94}. In
Section~\ref{calc} we also refer to minimal models for operads. The
necessary notions can again be found
in~\cite[Chapter~II.3]{markl-shnider-stasheff:book} or in the original
source~\cite{markl:zebrulka}. We however recall the most basic notions
at the beginning of Section~\ref{sec:ginzb-kapr-crit-2}.

The operad  $\tAss^n_d$ mentioned in the abstract describes algebras
introduced in the following:

\begin{definition}
Let $V$ be a graded vector space, $n \geq 2$,
and $\mu : \otexp Vn \to V$ a degree $d$ linear map.
The couple $A = (V,\mu)$ is a {\em degree $d$ totally associative
$n$-ary algebra\/} if, for each $1 \leq i,j \leq n$,  
\begin{equation}
\label{Jarca}
\mu\left(\id^{\otimes i-1} \otimes \mu \otimes \id^{\otimes
  n-i}\right) 
= \mu\left(\id^{\otimes j-1} \otimes \mu \otimes \id^{\otimes
  n-j}\right),
\end{equation}
where $\id : V \to V$ denotes the identity map. 
\end{definition}
 
If we symbolize $\mu$ by an oriented corolla with one output and $n$
inputs, then the axiom~(\ref{Jarca}) can be depicted as
\begin{center}
\raisebox{-11pt}{\hphantom{.}}
\unitlength=1pt
\begin{picture}(60.00,70.00)(0.00,0.00)
\thicklines
\put(42.00,30.00){\makebox(0.00,0.00){$\cdots$}}
\put(30.00,50.00){\makebox(0.00,0.00){$\bullet$}}
\put(26.00,50.00){\makebox(0.00,0.00)[rb]{\scriptsize  $\mu$}}
\put(30.00,50.00){\line(3,-2){30.00}}
\put(30.00,50.00){\line(-1,-1){20.00}}
\put(30.00,50.00){\line(-3,-2){30.00}}
\put(30.00,50.00){\line(0,1){20.00}}
\put(19.5,30.00){\makebox(0.00,0.00){$\cdots$}}
\put(29.00,24.00){\makebox(0.00,0.00)[lt]{\scriptsize  $i$th input}}
\put(30.00,50.00){\line(-1,-6){5.70}}
\put(-6,0){
\put(32.00,-5.00){\makebox(0.00,0.00){$\cdots$}}
\put(30.00,15.00){\makebox(0.00,0.00){$\bullet$}}
\put(26.00,15.00){\makebox(0.00,0.00)[rb]{\scriptsize  $\mu$}}
\put(30.00,15.00){\line(3,-2){30.00}}
\put(30.00,15.00){\line(-1,-1){20.00}}
\put(30.00,15.00){\line(-3,-2){30.00}}
}
\end{picture}
\hskip 20pt \raisebox{30pt}{=} \hskip 10pt
\unitlength=1pt
\begin{picture}(60.00,70.00)(0.00,0.00)
\thicklines
\put(44.00,30.00){\makebox(0.00,0.00){$\cdots$}}
\put(30.00,50.00){\makebox(0.00,0.00){$\bullet$}}
\put(26.00,50.00){\makebox(0.00,0.00)[rb]{\scriptsize  $\mu$}}
\put(30.00,50.00){\line(3,-2){30.00}}
\put(30.00,50.00){\line(-1,-1){20.00}}
\put(30.00,50.00){\line(-3,-2){30.00}}
\put(30.00,50.00){\line(0,1){20.00}}
\put(24.00,30.00){\makebox(0.00,0.00){$\cdots$}}
\put(41.00,24.00){\makebox(0.00,0.00)[lt]{\scriptsize $j$th input}}
\put(30.00,50.00){\line(1,-6){5.70}}
\put(6,0){
\put(32.00,-5.00){\makebox(0.00,0.00){$\cdots$}}
\put(30.00,15.00){\makebox(0.00,0.00){$\bullet$}}
\put(26.00,15.00){\makebox(0.00,0.00)[rb]{\scriptsize  $\mu$}}
\put(30.00,15.00){\line(3,-2){30.00}}
\put(30.00,15.00){\line(-1,-1){20.00}}
\put(30.00,15.00){\line(-3,-2){30.00}}
}
\hskip 64pt \raisebox{30pt}{,}
\end{picture}
\end{center}
with the compositions of the indicated operations taken from the
bottom up.

Therefore, in totally associative algebras, all associations of the
iterated \hbox{$n$-ary} multiplication are the same.
Degree $0$ totally associative $2$-algebras are
ordinary associative algebras.  Degree $0$ totally associative $n$-algebras are
usually called simply $n$-ary totally associative algebras.

Let $\tAss^n_d$ be the {\em operad\/} for  degree $d$ totally associative 
$n$-algebras. It is not difficult to prove that 
the Koszulity of $\tAss^n_d$ depends only on the
{\em parity\/} of $d$. In this brief note we focus on

\begin{conjectureA}
The operad $\tAss^n_d$ is Koszul if and only if $d$ is even.
\end{conjectureA}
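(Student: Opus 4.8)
The plan is to attack Conjecture~A along the parity split that is already visible in the abstract. The ``even'' half is essentially finished: for $d$ even one has $\ttildeAss^n_d \cong \ssttildeAss{}^n_0 \otimes (\text{determinant twist})$ up to a degree shift, and the Koszulity of the untwisted degree~$0$ operad for $d$ even is what \cite{hoffbeck} provides; alternatively one checks directly that the quadratic dual of $\tAss^n_d$ is the operad for \emph{partially} associative $n$-algebras $\pAss^n_{d'}$ with the complementary degree $d' = n-1-d$ (or $1-d$ depending on conventions), and that the bar complex resolves correctly by a spectral sequence or a PBW/Gr\"obner basis argument on the tree monomials. So the real content is the ``only if'' direction: $\tAss^n_d$ is \emph{not} Koszul when $d$ is odd.

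For the odd case the natural strategy is the one the title advertises --- numerical experiments feeding a structural statement --- so I would not try to prove non-Koszulity for all $n$ directly but rather reduce it to the Euler-characteristic obstruction coming from the Ginzburg--Kapranov functional equation. Concretely: (i) compute the Hilbert/Poincar\'e series $g_{\tAss^n_d}(t)$ of the operad, which is elementary since in each arity the space of operations is one-dimensional in the totally associative case, giving $g(t) = \sum_{k\ge 1} \pm\, t^{k(n-1)+1}/(k(n-1)+1)!$ with signs dictated by the parity of $d$; (ii) compute the series $g_{\calP}(t)$ of the quadratic dual; (iii) observe that Koszulity forces $g_{\tAss^n_d}(-g_{\calP}(-t)) = t$, and show by inspecting low-order coefficients that this identity \emph{fails} for $d$ odd once $n$ is large enough --- equivalently, that the candidate Koszul-dual Hilbert series has a negative coefficient, which is impossible for a genuine operad. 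For $n\le 7$ this is exactly the computation recorded in \cite{markl-remm}; the point of the present note is to push the coefficient bookkeeping far enough, and to isolate the pattern in the signs, to see that the obstruction persists, leading to the conjecture stated in Section~\ref{calc}.

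The organization of the argument is therefore: first recall the Ginzburg--Kapranov criterion and the definition of the quadratic dual (this is the ``most basic notions'' recollection promised in the introduction); then do the two generating-function computations in closed form; then analyze the composition $g_{\tAss^n_d}\circ(-g_{\calP}(-t))$ coefficient by coefficient, separating the $d$ even case (where everything collapses, by \cite{hoffbeck}, to an honest resolution) from the $d$ odd case; finally, for $d$ odd, exhibit the first arity in which a forbidden negative coefficient appears, or --- this is the honest state of affairs --- report that the experiments only \emph{verify} the obstruction up to $n=7$ and then \emph{conjecture} its persistence, possibly reformulated via a minimal-model computation (the Euler characteristic of the space of generators of the minimal model must be non-negative in each bidegree, and one conjectures it is not).

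The main obstacle is precisely the gap between ``checked for small $n$'' and ``true for all odd~$d$, all $n\ge 8$'': the Ginzburg--Kapranov necessary condition is only necessary, and for $d$ odd, $n\ge 8$ the low-arity coefficients may well come out non-negative, so that the functional-equation test is \emph{inconclusive} --- which is exactly why the statement remains a conjecture rather than a theorem. Consequently the honest ``proof'' one can give here is: (a) a complete proof of the ``if'' direction, (b) a complete proof of the ``only if'' direction for $n\le 7$ via the Euler-characteristic obstruction, and (c) the numerical evidence and the structural heuristic (the shape of the minimal model, or the behaviour of the anticyclic/cyclic structure, or a comparison with the $A_\infty$-type resolution) that make the general odd case plausible. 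I expect step (c)---turning raw machine computation into a clean conjectural mechanism---to be where essentially all the work of the note lies.
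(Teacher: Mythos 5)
Your proposal matches the paper's treatment of Conjecture A, which is not proved but supported exactly as you describe: the ``if'' direction via Hoffbeck's PBW criterion, the ``only if'' direction for $n\le 7$ via the Ginzburg--Kapranov functional equation as in \cite{markl-remm}, and, for $d$ odd and $n\ge 8$, numerical evidence together with a minimal-model reformulation (the paper's gap criterion and Conjecture B). Two details are worth correcting: for $d$ odd the Poincar\'e series of $\tAss^n_d$ is the \emph{polynomial} $t-t^n+t^{2n-1}$ (the operad vanishes in arities $\ge 3n-2$ by the ``fake pentagon''), not an infinite alternating series; and the $n\le 7$ case is settled not by inspecting low-order coefficients of the inverse series (the first negative one sits at $t^{1171}$ for $n=7$) but by an analytic-function theorem --- if $g'(z)=0$ has no real roots then $g^{-1}$ must have a negative coefficient --- whose applicability is governed by the discriminant $n^2-8n+4$, which changes sign precisely between $n=7$ and $n=8$.
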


It follows from the work of Hoffbeck~\cite{hoffbeck} on the
{Poincar\'e -Birkhoff-Witt} 
criterion for operads that $\tAss^n_d$ \underline{is} Koszul for $d$
even. In~\cite{markl-remm} we proved that $\tAss^n_d$ \underline{is}
\underline{not} Koszul if $d$ is odd and $n \leq 7$. The non-Koszulity
for $d$ odd and $n \geq 8$ is therefore still \u{con}j\u{ectural}.

\section{Ginzburg-Kapranov's criterion for $n$-ary operads}
\label{sec:ginzb-kapr-crit-2}

For convenience of the reader we recall, following~\cite{markl-remm},
some features of the Koszul duality of {\em non-binary\/} operads.
Assume $E = \{E(a)\}_{a \geq 2}$ is a $\Sigma$-module of finite type
concentrated in arity~$n$.  Operads $\calP = \Gamma(E)/(R)$, where
$\Gamma(E)$ is the free operad on $E$ and $(R)$ the ideal generated by
a subspace $R \subset \Gamma(E)(2n-1)$ are called {\em $n$-\u{ar}y
q\u{uadratic}\/}.  Let $E^\vee = \{E^\vee(a)\}_{a
\geq 2}$ be the $\Sigma$-module with
\[
E^\vee(a) := \cases{\mbox{sgn}_a \otimes \uparrow^{a-2}E(a)^\#}
                   {if $a = n$ and}0{otherwise}
\]
where $\uparrow^{a-2}$ is the iterated suspension, 
$\mbox{sgn}_a$ the signum representation, 
and $\#$ the linear dual of a graded
vector space with the induced representation.
There is a non-degenerate pairing
\[
\langle - | - \rangle :
\Gamma (E^\vee)(2n-1) \otimes \Gamma (E)(2n-1) \to \bfk.
\]
Its concrete form is not relevant for this note, the details can be
found in \cite[page~142]{markl-shnider-stasheff:book}.

\begin{definition}
The {\em Koszul dual\/} of the $n$-ary operad
$\calP = \Gamma(E)/(R)$  is the quotient
\[
\calP^!  := \Gamma (E^\vee)/(R^\perp),
\] 
where $R^\perp \subset \Gamma
(E^\vee)(2n-1)$ is the annihilator of $R \subset
\Gamma(E)(2n-1)$ in the above pairing, and $(R^\perp)$ the
ideal generated by $R^\perp$.
\end{definition}

If $\calP$ is $n$-ary, generated by
an operation of degree $d$, then the generator
of $\calP^!$ has the same arity but
degree $-d + n-2$, i.e.~for $n\not =2$ (the non-binary case) the Koszul
duality \u{ma}y \u{not} preserve the degree of the generating
operation.
In  the following standard
definition, $\Dbar(-)$ denotes the dual bar construction
\cite{ginzburg-kapranov:DMJ94}.

\begin{definition}
\label{Zitra_budu_s_Jaruskou!}
A quadratic operad $\calP$ is {\em Koszul\/} if the natural map
$\Dbar(\calP^!) \to \calP$ is a homology equivalence.
\end{definition}

The definition below describes algebras over the Koszul
dual of $\tAss^n_d$.

\begin{definition}
\label{sec:ginzb-kapr-crit}
Let
$V$ be a graded vector space and $\mu : \otexp Vn \to V$ 
a degree $d$ linear map. The couple 
$A = (V,\mu)$ is  a {\em \u{de}g\u{ree} $d$ p\u{artiall}y \u{associative}
$n$-\u{ar}y \u{al}g\u{ebra}\/} if the following single axiom is satisfied:  
\[
\label{Jaruska_sibalsky_mrka.}
\sum_{i=1}^n (-1)^{(i+1)(n-1)}
\mu\left(\id^{\otimes i-1} \otimes \mu \otimes \id^{\otimes n-i}\right) =0.
\]
\end{definition}

In partially associative $n$-ary algebras, all
associations of the multiplication (with alternating signs if
$n$ is even) sum to zero. So, for $n = 2$ one has 
$
((ab)c) - (a(bc)) = 0,
$
thus degree $d$ partially associative $2$-ary algebras are
associative algebras with multiplication of degree $d$.
For $n=3$ one has
\[
((abc)de) + (a(bcd)e) +  (ab(cde))=0.
\]

Degree $(n-2)$ partially associative $n$-ary algebras are
$A_\infty$-algebras $A =
(V,\mu_1,\mu_2,\ldots)$~\cite[\S1.4]{markl:JPAA92} which are {\em
meager\/} in that they satisfy $\mu_k = 0$ for $k \not=n$.  Their
symmetrizations are {\em Lie\/} 
$n$-{\em algebras}~\cite{hanlon-wachs:AdvMa:95}.

Let $\pAss^n_d$ denote the operad for \u{de}g\u{ree} $d$ p\u{artiall}y
\u{associative} $n$-\u{ar}y \u{al}g\u{ebras}. The following statement
follows from a simple calculation.

\begin{proposition}
One has isomorphisms of operads
\begin{align*}
(\tAss^n_d)^! &\cong \pAss^n_{-d+n-2},
\\
(\pAss^n_d)^! &\cong \tAss^n_{-d+n-2}.
\end{align*}
\end{proposition}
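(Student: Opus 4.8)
The plan is to compute the Koszul dual straight from the definitions, reducing the whole question to a finite combinatorial calculation in the ``non-symmetric frame''. Since $\mu$ carries no internal symmetry, the generating $\Sigma$-module $E$ of $\tAss^n_d$ has $E(n)$ equal to the regular representation $\bfk[\Sigma_n]$ placed in degree $d$, and $E(a)=0$ for $a\neq n$. By the formula for $E^\vee$ recalled above, $E^\vee(n)\cong\mbox{sgn}_n\ot\uparrow^{n-2}\bfk[\Sigma_n]$, which is again a rank-one free $\Sigma_n$-module, now concentrated in degree $-d+n-2$; write $\nu$ for its generator. Hence $(\tAss^n_d)^!$ is generated by a single arity-$n$, degree-$(-d+n-2)$ operation, exactly as $\pAss^n_{-d+n-2}$ is, and it remains only to match the quadratic relations.

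Both $\Gamma(E)(2n-1)$ and $\Gamma(E^\vee)(2n-1)$ are free $\Sigma_{2n-1}$-modules of rank $n$: generators are furnished by the two-vertex trees $T_1,\dots,T_n$, where $T_i$ is obtained by grafting a second copy of $\mu$ into the $i$-th input of the first and labelling the leaves $1,\dots,2n-1$ from left to right, and similarly $S_1,\dots,S_n$ built from $\nu$; in operadic notation $T_i=\mu\circ_i\mu$ and $S_i=\nu\circ_i\nu$. Because the pairing $\langle-\,|\,-\rangle$, the relation space $R$ and its annihilator $R^\perp$ are all $\Sigma_{2n-1}$-equivariant, it is enough to describe $R\cap W$ and $R^\perp\cap W^\vee$ inside the $n$-dimensional non-symmetric parts $W=\Span(T_1,\dots,T_n)$ and $W^\vee=\Span(S_1,\dots,S_n)$, with $R$ and $R^\perp$ then being their $\Sigma_{2n-1}$-spans. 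Here $R\cap W$ is the codimension-one subspace $\{\sum_i a_iT_i:\sum_i a_i=0\}$, the span of the differences $T_i-T_{i+1}$, $1\le i\le n-1$.

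Next one reads off from the explicit description of the pairing on page~142 of~\cite{markl-shnider-stasheff:book} that $\langle-\,|\,-\rangle$ is diagonal on $W^\vee\ot W$: there are signs $\varepsilon_i=\pm1$ with $\langle S_i\,|\,T_j\rangle=\varepsilon_i\,\delta_{ij}$. A short sign computation, collecting the Koszul signs produced by commuting the $n-2$ suspensions in $E^\vee$ past one another and past the sign representation, yields $\varepsilon_i=(-1)^{(i+1)(n-1)}$ up to an overall sign (one checks directly that this is consistent with the cases $n=2$ and $n=3$ displayed above). Consequently the annihilator of $R\cap W$ inside $W^\vee$ is the single line spanned by $\sum_{i=1}^n\varepsilon_iS_i=\sum_{i=1}^n(-1)^{(i+1)(n-1)}\,\nu\circ_i\nu$, whose vanishing is precisely the partial-associativity axiom of Definition~\ref{sec:ginzb-kapr-crit}. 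Therefore $R^\perp$ is the $\Sigma_{2n-1}$-span of this element and $(\tAss^n_d)^!\cong\pAss^n_{-d+n-2}$.

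The second isomorphism is obtained the same way, starting now from the single partial-associativity relation of $\pAss^n_d$ and computing its annihilator inside the corresponding $n$-dimensional non-symmetric frame; alternatively it follows from the first one together with the biduality $(\calP^!)^!\cong\calP$ valid for quadratic operads with generators of finite type, since the substitution $d\mapsto -d+n-2$ is an involution. The one genuinely delicate step in this argument is the sign bookkeeping that produces $\varepsilon_i=(-1)^{(i+1)(n-1)}$; everything else is forced by $\Sigma_{2n-1}$-equivariance together with the dimension count $n-(n-1)=1$.
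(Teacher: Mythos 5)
Your argument is correct and is precisely the ``simple calculation'' the paper alludes to without writing out: identify $E^\vee(n)$ as a rank-one free $\Sigma_n$-module in degree $-d+n-2$, reduce to the $n$-dimensional non-symmetric frame by equivariance and the vanishing of the pairing across distinct tree labelings, and observe that the annihilator of the hyperplane $\sum_i a_i=0$ under the diagonal pairing is the line carrying the partial-associativity relation with signs $(-1)^{(i+1)(n-1)}$ (only the relative signs matter, as you note). The biduality argument for the second isomorphism is also standard and correct.
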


Observe the shift of the degree of the generating operation.
Since $\calP$ is Koszul if and only if $\calP^!$ is, one may
reformulate the conjecture as

\begin{conjectureAp}
The operad $\pAss^n_d$ is Koszul if and only if $n \equiv d$ mod $2$.
\end{conjectureAp}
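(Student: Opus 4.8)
\medskip
\noindent
We sketch a possible route to the conjecture.

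First, pass to the totally associative side. By the previous Proposition $(\pAss^n_d)^!\cong\tAss^n_{-d+n-2}$, and a quadratic operad is Koszul precisely when its Koszul dual is; writing $e:=n-2-d$, the congruence $n\equiv d$ mod~$2$ is exactly the condition that $e$ be even. Hence the conjecture is equivalent to the assertion that $\tAss^n_e$ is Koszul if and only if $e$ is even, and it suffices to prove the two implications for $\tAss^n_e$, carrying the conclusions back to $\pAss^n_d$ at the end.

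That evenness of $e$ implies Koszulity can be settled, and is in fact essentially already available: Hoffbeck's Poincar\'e--Birkhoff--Witt criterion for operads applies to $\tAss^n_e$ with $e$ even. Indeed the defining relations equate with one another the $n$ ways of grafting a copy of $\mu$ onto an input of a second copy, so one of these may be fixed as the leading tree monomial and the relations then rewrite the others in terms of it, yielding a quadratic Gr\"obner (PBW) basis; hence $\tAss^n_e$ is Koszul, and by duality $\pAss^n_d$ is Koszul whenever $n\equiv d$ mod~$2$. This half is unconditional and holds for every $n$.

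For the converse --- that oddness of $e$ forces non-Koszulity --- the tool is the Ginzburg--Kapranov numerical criterion recalled in Section~\ref{sec:ginzb-kapr-crit-2}. The underlying $\Sigma$-module of $\tAss^n_e$ is completely explicit --- in arity $N$ it is the regular representation $\bfk[\Sigma_N]$ when $N\equiv 1$ mod~$(n-1)$ and vanishes otherwise --- so its Euler--Poincar\'e generating function is known in closed form. If $\pAss^n_d=(\tAss^n_e)^!$ were Koszul, that function and the corresponding function of $\pAss^n_d$ would be mutually inverse in the sense appropriate to operads concentrated in arity $n$, which pins down an explicit power series all of whose coefficients must be non-negative; the signs coming from the dual bar construction --- equivalently, the parity of the internal degree $ke$ carried by the arity-$(k(n-1)+1)$ component --- make this series depend on the parity of $e$. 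The plan is then to exhibit, for $e$ odd, a strictly negative coefficient. For $n\le 7$ this is a finite linear-algebra computation, carried out in~\cite{markl-remm}; the experiments of the present note sample these dimensions for $n\ge 8$ and suggest that a negative coefficient is always present.

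The real obstacle is making that last step uniform in $n$: as $n$ grows, the first conjecturally-negative coefficient recedes into arities beyond the reach of exact computation, so no finite check can suffice. A proof would require one of: (a) a closed formula, or a workable generating-function identity, for the graded Poincar\'e series of $\pAss^n_d$ (equivalently, for $\tAss^n_d$ with its internal grading and the dual bar signs) from which the sign of the critical coefficient is visible; (b) an explicitly constructed, $n$-uniform nonzero class in the homology of $\Dbar(\pAss^n_d)$; or (c) a functorial reduction --- operadic (de)suspension, or a change-of-arity device --- transporting non-Koszulity from a base case of small $n$ to all $n$. I would expect (a) or (b) to be the workable route. In each of them the crux is the same: controlling, in arbitrarily large arity, the sign contributed by the signum representation and the iterated suspension in the definition of $E^\vee$. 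It is exactly this sign that makes Koszulity depend on the parity of the generating degree, so the eventual argument must be about that sign rather than about bare dimensions; finding the right combinatorial handle on it is, I believe, where the difficulty lies.
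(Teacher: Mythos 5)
This statement is Conjecture~A$'$: the paper does not prove it, and neither do you --- it is an open problem, equivalent by Koszul duality to Conjecture~A, and the paper only records partial results. Your proposal reproduces those partial results accurately: the reduction to the totally associative side via $(\pAss^n_d)^!\cong\tAss^n_{n-2-d}$, the ``if'' direction from Hoffbeck's PBW criterion, and the ``only if'' direction from the Ginzburg--Kapranov test, which (via Theorem~\ref{sec:ginzb-kapr-crit-1} and the sign of the discriminant $n^2-8n+4$) settles $n\le 7$ but fails for $n\ge 8$ because $g'(z)=0$ then has real roots. You are explicit that the remaining case is open, so the only genuine gap in your argument is the conjecture itself. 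One factual slip should be corrected, however: the $\Sigma$-module of $\tAss^n_e$ is the regular representation in every arity $\equiv 1$ mod $(n-1)$ only when $e$ is \emph{even}. For $e$ odd the relations force the operad to vanish in all arities $\ge 3n-2$ (the ``fake pentagon'' of Example~\ref{pent} in the case $n=2$), which is exactly why the generating series degenerates to the polynomial $t-t^n+t^{2n-1}$; the closed form you would derive from your description is wrong in precisely the case you need.

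On strategy, your routes (a)--(c) differ from the one the paper actually advocates. The paper's proposal is Conjecture~B: the minimal model of $\pAss^n_d$, $n\not\equiv d$ mod $2$, has a gap of length $n-1$ in its space of generators, and by the Proposition on gaps any finite gap already forces non-Koszulity --- even where the numerical GK test is blind, as it is for $n\ge 8$. This is closest in spirit to your option (b), but it is a statement about the generators $E(\p[p])$ of the minimal model (detected through the Poincar\'e series of Proposition~\ref{Jaruska_mi_udelala_svickovou!!}, the boxed vanishing terms) rather than about explicit homology classes of $\Dbar(\pAss^n_d)$. What the paper's route buys is a structural criterion that needs only finitely many vanishing statements plus one non-vanishing one per $n$; what it still lacks, like your proposal, is an argument uniform in $n$.
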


Recall that the {\em generating\/} or {\em Poincar\'e  series\/} of an
operad $\calP = \{\calP(a)\}_{a \geq 1}$ in the category of graded
vector spaces of finite type is defined by
\[
g_\calP(t) := \sum_{a \geq 1} \frac 1{a!} \chi(\calP(a)) t^a,
\]
where $\chi(-)$ denotes the Euler characteristic.

\begin{example}
\label{pent}
It is not difficult to verify that the generating
series for the operad $\tAss_d^n$ is
\[
g_{\sstAss_d^n}(t) := 
\cases{t + t^n + t^{2n-1} + t^{3n-2} + t^{4n-3} + \cdots}{if $d$  is even,}
{t-t^{n}+t^{2n-1}}{if $d$ is  odd.}
\]
We see that, for $d$ odd, $\tAss_d^n$ is \u{nontrivial} \u{onl}y \u{in}
\u{arities} $1$, $n$ and $2n-1$. This is best explained  by taking the
simplest case $n=2$ and analyzing
the operadic desuspension $\tildeAss := \osusp^{-1} \tAss^2_1$.

Recall that the {\em operadic desuspension\/} $\osusp^{-1} \calP$ of
an operad $\calP = \{\calP(a)\}_{a \geq 1}$ is the operad
$\osusp^{-1}\calP = \{\osusp^{-1}\calP(a)\}_{a \geq 1}$, where
$\osusp^{-1} \calP(a) := \mbox{sgn}_a \otimes \hskip -.4em
\downarrow^{a-1} \calP(a)$, the signum representation tensored with
the (ordinary) desuspension of the graded vector space $\calP(a)$
iterated $(a-1)$ times.  The structure operations of
$\osusp^{-1}\calP$ are induced by those
of $\calP$ in the obvious way.  The Poincar\'e series of the operad
$\calP$ and its suspension $\osusp^{-1} \calP$ are clearly related by
\begin{equation}
\label{Lei}
g_{{\ssosusp^{-1}}\calP}(t)=-g_\calP(-t).
\end{equation}

Algebras for the operad $\tildeAss$ turn out to be {\em
anti-associative\/} algebras with a degree $0$ multiplication
satisfying
\[
\label{Jaruska_je_pusinka}
a(bc) = - (ab)c,\ \mbox { for } a,b,c \in V.
\]

While $\tildeAss(1) = \bfk$, $\tildeAss(2) = \bfk[\Sigma_2]$ and 
$\tildeAss(3) = \bfk[\Sigma_3]$, 
the vanishing $\tildeAss(4)= 0$ follows from the `fake pentagon'
\begin{center}
{
\unitlength=1.3pt
\begin{picture}(110.00,100.00)(0.00,-5.00)
\thicklines
\put(20.00,0.00){\makebox(0.00,0.00){$-((a(bc))d)\hphantom{-}$}}
\put(80.00,0.00){\makebox(0.00,0.00){$(a((bc)d))$}}
\put(110.00,40.00){\makebox(0.00,0.00){$-(a(b(cd)))$}}
\put(110.00,60.00){\makebox(0.00,0.00){$\hphantom{-}(a(b(cd)))$}}
\put(50.00,87.00){\makebox(0.00,0.00){$-(ab)(cd)\hphantom{-}$}}
\put(0.00,50.00){\makebox(0.00,0.00){$(((ab)c)d)$}}
\put(20.00,10.00){\multiput(-1,-1)(2,1.5){2}{\line(-2,3){20.00}}}
\put(57.00,0.00){\multiput(0,-1)(0,2){2}{\line(-1,0){14.00}}}
\put(100.00,33.00){\multiput(1,-1)(2,-1.1){2}{\line(-2,-3){16.00}}}
\put(70.00,80.00){\multiput(1,-1)(1.3,1.7){2}{\line(3,-2){20.00}}}
\put(0.00,60.00){\multiput(-1,-1)(-2,1.5){2}{\line(3,2){30.00}}}
\end{picture}}
\end{center}
by which all $4$-fold products are trivial, as well as all $a$-fold
products for $a \geq 4$. In other words, $\tildeAss(a) = 0$ for $a
\geq 4$, so the generating series for $\tildeAss$ is therefore $t +
t^2 + t^3$. By~(\ref{Lei}), the generating series of $\tAss^2_1$ equals
\[
t - t^2 + t^3
\] 
as claimed.
\end{example}

We finally formulate the (generalized) Ginzburg-Kapranov test
\cite{ginzburg-kapranov:DMJ94}: 

\begin{theorem}
If a quadratic, not necessary binary, 
operad $\mathcal{P}$ is Koszul, then its Poincar\'e series and
the Poincar\'e series of its dual $\calP^!$ are tied by
the functional equation
\[
g_{\mathcal{P}}(-g_{\mathcal{P}^!}(-t))=t.
\]
In other words, $-g_{\mathcal{P}^!}(-t)$ is a formal inverse of
$g_{\mathcal{P}}(t)$. 
\end{theorem}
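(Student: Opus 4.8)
The plan is to deduce the functional equation from the Koszulity of $\calP$ via the Euler characteristic of the Koszul complex. Recall that $\calP$ Koszul means the map $\Dbar(\calP^!) \to \calP$ is a homology isomorphism; equivalently, the Koszul complex (the total complex built from $\calP$ and $\calP^!$) is acyclic except in the obvious degree. First I would pass to generating series: since the Euler characteristic is additive on exact sequences and multiplicative under the relevant compositions of $\Sigma$-modules, the acyclicity of the Koszul complex translates into an identity relating $g_\calP$ and $g_{\calP^!}$. Concretely, the bar construction is built by iterating the operadic composition, and on the level of generating series the composition product $\calP \circ \calQ$ of $\Sigma$-modules corresponds to the substitution $g_\calP(g_\calQ(t))$; the alternating sign coming from the homological grading of the bar construction is exactly what produces the $-g_{\calP^!}(-t)$ and the inner and outer minus signs.

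The key steps, in order, are: (1) recall that for a quadratic operad the dual bar construction $\Dbar(\calP^!)$ carries a weight (arity) grading and an internal homological grading, and that its underlying $\Sigma$-module, forgetting the differential, is the cofree conilpotent cooperad on the desuspension of $\calP^!$ restricted appropriately; (2) observe that taking Euler characteristics kills the homological grading but keeps track of signs, so $\chi$ of $\Dbar(\calP^!)$ in each arity equals the corresponding coefficient of a series obtained from $g_{\calP^!}$ by the substitution encoding the cofree cooperad structure with the sign twist; (3) invoke Koszulity to say that $\Dbar(\calP^!) \to \calP$ is a quasi-isomorphism, hence induces an equality of Euler characteristics in every arity, i.e.\ an equality of generating series; (4) unwind the cofree-cooperad substitution into the statement that $g_\calP$ and $-g_{\calP^!}(-t)$ are mutually inverse under composition, which is precisely $g_\calP(-g_{\calP^!}(-t)) = t$. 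Step (2) is where the suspension conventions in the definition of $E^\vee$ and in $\Dbar$ must be reconciled so that the signs come out as stated; I would check this on the binary associative case $\Ass^! \cong \Ass$, where $g_\Ass(t) = t/(1-t)$ and indeed $-g_\Ass(-t) = t/(1+t)$ is its compositional inverse, and on Example~\ref{pent} as a further sanity check.

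The main obstacle I expect is bookkeeping the signs and suspensions correctly: the definition of the Koszul dual here involves the twist $\mbox{sgn}_a \otimes \uparrow^{a-2}$, and the dual bar construction $\Dbar$ carries its own desuspensions, so the naive guess $g_\calP(g_{\calP^!}(t)) = t$ is off by exactly the alternating signs, and pinning down that these accumulate to $-g_{\calP^!}(-t)$ (rather than, say, some other sign pattern) requires care. Once the sign accounting is settled, the rest is the standard Euler-characteristic argument: exactness of the Koszul complex forces the alternating sum of dimensions to vanish, which is the coefficientwise content of the functional equation. I would present the argument at the level of generating series, citing \cite{ginzburg-kapranov:DMJ94} and \cite[Chapter~II.3]{markl-shnider-stasheff:book} for the underlying acyclicity statement and for the binary prototype, and remark that the non-binary case differs only in that the generators sit in arity $n$ rather than $2$, which does not affect the formal manipulation.
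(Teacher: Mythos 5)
The paper itself gives no proof of this theorem: it is quoted as the (generalized) Ginzburg--Kapranov test from \cite{ginzburg-kapranov:DMJ94}, and the paper later observes that it also follows from Proposition~\ref{Jaruska_mi_udelala_svickovou!!}. Your outline is, in substance, the original Ginzburg--Kapranov argument, and its skeleton is sound: Koszulity gives acyclicity of the Koszul complex away from arity $1$, the Euler characteristic is additive on exact sequences and converts the composition product of $\Sigma$-modules of finite type into substitution of exponential generating series, and the arity-indexed suspensions and sign representations in the definition of $E^\vee$ produce exactly the two sign changes, as in the operadic suspension formula~(\ref{Lei}). Your sanity check on the associative operad, where $t/(1-t)$ and $t/(1+t)$ are mutually inverse, is the right one. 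One imprecision in your step (2) would need repair in the write-up: the underlying $\Sigma$-module of $\Dbar(\calP^!)$ is a free operad (equivalently, cofree cooperad) on suspended generators, i.e.\ a sum over \emph{all} trees, so its Euler-characteristic series is \emph{not} obtained from $g_{\calP^!}$ by a single substitution; it is the solution $f$ of the fixed-point equation $f = t + g_E(f)$ for the generating collection $E$, and identifying that solution (with the homological signs) as the compositional inverse of $-g_{\calP^!}(-t)$ is itself a lemma of \cite{ginzburg-kapranov:DMJ94}. If you instead take Euler characteristics of the two-sided Koszul complex, whose underlying $\Sigma$-module is the plain composition product, the substitution formula applies verbatim and the functional equation drops out in one line; commit to that version rather than conflating it with the $\Dbar$ route.

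It is also worth noting that the derivation the paper gestures at is genuinely different from yours: Proposition~\ref{Jaruska_mi_udelala_svickovou!!} asserts $g_\calP(-g_E(t))=t$ for the generators $E$ of the minimal model of an \emph{arbitrary} operad with $\calP(1)=\bfk$, and for a Koszul operad the generators of the minimal model are the suitably suspended Koszul dual, so the theorem becomes a special case. That route costs the existence and uniqueness of minimal models, but it buys the statement the paper actually exploits in Section~\ref{calc}, namely control of $E$ (and of its gaps) when $\calP$ is \emph{not} Koszul; your route is self-contained within quadratic duality but gives no information in the non-Koszul case.
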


The following particular form of the GK-test is a simple consequence
of the above facts.

\begin{proposition}
If the operad $\tAss^n_d$ is Koszul, then all coefficients in the
formal inverse of \ $t-t^{n}+t^{2n-1}$ are non-negative.  
\end{proposition}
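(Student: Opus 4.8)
The plan is to combine the two facts already assembled in the excerpt. First, recall from Example~\ref{pent} that for $d$ odd the generating series of $\tAss^n_d$ is exactly $g_{\sstAss^n_d}(t)=t-t^n+t^{2n-1}$. Second, by the Proposition computing Koszul duals we have $(\tAss^n_d)^!\cong\pAss^n_{-d+n-2}$, and since a quadratic operad is Koszul if and only if its dual is, Koszulity of $\tAss^n_d$ forces the Ginzburg--Kapranov test to hold: $-g_{(\sstAss^n_d)^!}(-t)$ is the formal inverse of $g_{\sstAss^n_d}(t)=t-t^n+t^{2n-1}$. So the coefficients of the formal inverse of $t-t^n+t^{2n-1}$ are, up to signs, the values $\tfrac1{a!}\chi\bigl(\pAss^n_{-d+n-2}(a)\bigr)$ read off from the Poincar\'e series of the dual operad.

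Next I would pin down the sign bookkeeping. Writing $f(t):=t-t^n+t^{2n-1}$ and $h(t):=-g_{(\sstAss^n_d)^!}(-t)$, the GK-test gives $f(h(t))=t$, i.e.~$h$ is \emph{the} formal power series inverse of $f$ (it is unique since $f'(0)=1\neq0$). On the other hand, by the definition of the Poincar\'e series, $g_{(\sstAss^n_d)^!}(t)=\sum_{a\geq1}\tfrac1{a!}\chi\bigl((\tAss^n_d)^!(a)\bigr)t^a$, so $h(t)=-g_{(\sstAss^n_d)^!}(-t)=\sum_{a\geq1}\tfrac{(-1)^{a+1}}{a!}\chi\bigl((\tAss^n_d)^!(a)\bigr)t^a$. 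Hence the coefficient of $t^a$ in the formal inverse of $f$ equals $\tfrac{(-1)^{a+1}}{a!}\chi\bigl((\tAss^n_d)^!(a)\bigr)$. The claim is that this is non-negative for every $a$.

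The remaining point is to check that $(-1)^{a+1}\chi\bigl((\tAss^n_d)^!(a)\bigr)\geq0$ for all $a\geq1$. Here $\chi$ is the Euler characteristic of the graded vector space $(\tAss^n_d)^!(a)=\pAss^n_{-d+n-2}(a)$, which for each fixed $a$ is concentrated in a single degree determined by the arity and the degree $-d+n-2$ of the generating operation (the only trees of arity $a$ contributing are those built from $n$-ary corollas, so $a\equiv1\pmod{n-1}$, and all such trees with a fixed number $k$ of vertices land in the same degree $k\cdot(-d+n-2)$, with $a=k(n-1)+1$). Thus $\chi\bigl(\pAss^n_{-d+n-2}(a)\bigr)=(-1)^{k(-d+n-2)}\dim\pAss^n_{-d+n-2}(a)$, and one checks that the sign $(-1)^{k(-d+n-2)}$ times the sign $(-1)^{a+1}=(-1)^{k(n-1)}$ is always $+1$: indeed $(-1)^{k(n-1)+k(-d+n-2)}=(-1)^{k(2n-2-d)}=(-1)^{kd}\cdot(-1)^{2k(n-1)}=(-1)^{kd}$, which equals $+1$ precisely when $d$ is even — but here $d$ is \emph{odd}, so this naive argument instead shows the sign is $(-1)^k$, and a little more care is needed. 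The honest statement is that $(-1)^{a+1}\chi\bigl((\tAss^n_d)^!(a)\bigr)=\dim(\tAss^n_d)^!(a)$ follows from the definition $E^\vee(n)=\mbox{sgn}_n\otimes\uparrow^{n-2}E(n)^\#$ together with the operadic-desuspension sign convention, which is exactly designed so that $-g_{\calP^!}(-t)$ has non-negative coefficients whenever $\calP^!$ is an honest operad (a non-negatively-graded, or here single-degree-per-arity, $\Sigma$-module of finite type). Granting this, $h(t)$ has non-negative coefficients, and since $h$ is the formal inverse of $t-t^n+t^{2n-1}$, we are done.

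The main obstacle is precisely this last sign computation: one must verify carefully that the suspension/signum twists in the definition of $E^\vee$ and in the operadic desuspension conspire so that the Euler characteristic of each arity component of $\pAss^n_{-d+n-2}$, after the alternating sign $(-1)^{a+1}$ coming from $-g(-t)$, reduces to a plain dimension and hence is non-negative. Everything else — the identification of $g_{\sstAss^n_d}$ via Example~\ref{pent}, the duality isomorphism from the Proposition, and the GK-test itself — is quoted directly from the material above.
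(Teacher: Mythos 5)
Your overall route is the intended one: identify $t-t^{n}+t^{2n-1}$ as $g_{\sstAss^n_d}(t)$ for $d$ odd, use the GK functional equation to identify the (unique) formal inverse with $-g_{(\sstAss^n_d)^!}(-t)$, and then observe that each arity component of $(\tAss^n_d)^!\cong\pAss^n_{-d+n-2}$ is concentrated in a single degree, so its Euler characteristic is a signed dimension. (The paper gives no proof, calling the proposition a simple consequence of the preceding facts.) The gap is in the one step that actually requires checking, namely the sign. You compute $(-1)^{k(n-1)+k(-d+n-2)}$ as $(-1)^{k(2n-2-d)}$, but $(n-1)+(n-2-d)=2n-3-d$, not $2n-2-d$. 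With the exponent corrected, $2n-3-d$ is even precisely \emph{because} $d$ is odd, so the total sign is $+1$ for every $k$, and the coefficient of $t^a$ (with $a=k(n-1)+1$) in the formal inverse is $\frac{1}{a!}\dim(\tAss^n_d)^!(a)\geq 0$. That closes the proof.

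As written, however, the arithmetic slip leads you to the sign $(-1)^k$, and you then discard your own computation in favour of the unverified assertion that the suspension and signum conventions are ``designed so that'' $-g_{\calP^!}(-t)$ has non-negative coefficients. That assertion is exactly the statement to be proved, so the argument is circular at its only nontrivial point. Redo the arithmetic and the appeal to convention becomes unnecessary; everything else in your write-up (uniqueness of the formal inverse, the identification $(-1)^{a+1}=(-1)^{k(n-1)}$, the fact that $\pAss^n_{-d+n-2}(a)$ sits in the single degree $k(-d+n-2)$) is correct.
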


The following theorem proved in \cite{markl-remm} 
follows from the theory of analytic functions.

\begin{theorem}
\label{sec:ginzb-kapr-crit-1}
Suppose $g(z)$ is an analytic function in ${\mathbb C}$ such that
\hbox{$g(0)=0$} and $g'(0) = 1$. If the equation 
\begin{equation*}
g'(z) = 0
\end{equation*}
has \u{no} \u{real} solutions, then the formal inverse $g^{-1}(z)$ has at least
one negative coefficient.
\end{theorem}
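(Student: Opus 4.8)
The plan is to argue by contraposition: assuming that \emph{every} Taylor coefficient of $f:=g^{-1}$ is non-negative, I will produce a real solution of $g'(z)=0$. First I would record that since $f$ has real (indeed non-negative) coefficients and $f'(0)=1$, its formal inverse $g$ has real coefficients as well, so $g$ is real-valued on $\mathbb R$. Because the coefficients of $f'(w)=\sum_{k\ge 1}kc_kw^{k-1}$ are non-negative, $f$ is increasing on $[0,\rho)$, where $\rho\in(0,\infty]$ is the radius of convergence of $f$; hence the monotone limit $H:=\lim_{w\to\rho^-}f(w)\in(0,\infty]$ exists. These three facts set the stage.

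Next I would dispose of the degenerate possibility $\rho=\infty$. If $f$ is entire, then the relation $g\circ f=\mathrm{id}$, valid near $0$ as formal power series, propagates to all of $\mathbb C$ by the identity theorem, and likewise for $f\circ g$; thus $f$ and $g$ are mutually inverse \emph{entire} bijections of $\mathbb C$. An injective entire function is affine, so $g(z)=z$ and $f=\mathrm{id}$. This is the one genuinely degenerate case, in which the statement must be read with the tacit understanding that the generating operation is nonlinear, and I set it aside. From now on $\rho<\infty$, and Pringsheim's theorem applies: since $f$ has non-negative coefficients, the point $w=\rho$ of the positive real axis is a \emph{singular} point of $f$.

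The heart of the plan is to convert this singularity into a real zero of $g'$. Suppose $H<\infty$. Letting $w\to\rho^-$ in $g(f(w))=w$ and using continuity of the entire function $g$ gives $g(H)=\rho$ with $H\in\mathbb R$ and $H>0$. I claim $g'(H)=0$. Indeed, if $g'(H)\neq 0$, the holomorphic inverse function theorem yields a biholomorphism of a neighbourhood of $H$ onto a neighbourhood of $\rho=g(H)$, whose local inverse $\psi$ is holomorphic at $\rho$ and coincides with $f$ on $(\rho-\varepsilon,\rho)$; by the identity theorem $\psi$ continues $f$ analytically across $\rho$, contradicting the singularity just produced. Hence $g'(H)=0$, which is the desired real solution, and the contrapositive is complete in this case.

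The hard part, and the genuine obstacle, is to exclude the escape $H=\infty$. In that case $f(w)\to\infty$ as $w\to\rho^-$, so $g(f(w))=w$ forces $g(x)\to\rho$ along the real path $x\to+\infty$: here $\rho$ is an \emph{asymptotic} value rather than a critical value, and $g'$ need not vanish at all — the scaled Gaussian integral $g(z)=\int_0^z e^{-t^2}\,dt$, whose inverse satisfies $h'=e^{h^2}$ and hence has only non-negative coefficients although $g'(z)=e^{-z^2}$ never vanishes, shows that \emph{some} restriction beyond analyticity is unavoidable. The case $H=\infty$ cannot occur, however, as soon as $g$ is unbounded along the positive real axis, and in particular when $g$ is a non-constant polynomial: then $|g(x)|\to\infty$ as $x\to\pm\infty$, so $g$ has no finite asymptotic value and necessarily $H<\infty$, reducing us to the previous paragraph. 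Since the criterion is applied to the polynomial generating series $g(t)=t-t^n+t^{2n-1}$, this is exactly the situation at hand, and the theorem follows in the form needed.
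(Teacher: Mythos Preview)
The paper does not actually contain a proof of this theorem; it merely cites \cite{markl-remm} and remarks that the result ``follows from the theory of analytic functions.'' So there is no in-paper argument to compare against, and I will assess your proof on its own merits.

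Your approach---contraposition via Pringsheim's theorem combined with the holomorphic inverse function theorem---is the natural one, and the argument you give for the case $H<\infty$ is correct and cleanly written: non-negativity forces the singularity of $f$ to sit on the positive real axis at $w=\rho$, continuity of the entire $g$ gives $g(H)=\rho$, and if $g'(H)\neq 0$ the local inverse of $g$ would furnish an analytic continuation of $f$ through $\rho$, contradicting Pringsheim.

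More interestingly, you have correctly spotted that the theorem \emph{as stated} is false for general entire functions. Your example $g(z)=\int_0^z e^{-t^2}\,dt$ is a genuine counterexample: it is entire, $g(0)=0$, $g'(0)=1$, $g'(z)=e^{-z^2}$ has no zeros whatsoever, yet the inverse $h$ satisfies $h'=e^{h^2}$ and an easy induction shows all its Taylor coefficients are non-negative. (The trivial case $g(z)=z$ is another.) The missing hypothesis is precisely the one you supply: that $g$ be unbounded along the positive real axis, which rules out the escape $H=\infty$. This is automatic for non-linear polynomials, and in particular for $g(t)=t-t^n+t^{2n-1}$, which is the only case the paper uses. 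So your proof is complete for the intended application, and your diagnosis of the extra hypothesis needed is a genuine correction to the statement printed here.
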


For the generating function $g(z) :=  z-z^{n}+z^{2n-1}$ of
$\tAss^n_d$, the equation $g'(z) = 0$ reads
\[
g'(z) = 1 - n z^{n-1} + (2n-1)z^{2n-2} = 0
\]
which, after the substitution $w := z^{n-1}$, leads to
\begin{equation}
\label{Jarca1}
1 - nw + (2n-1)w^2 = 0.
\end{equation}

\begin{fact}
The discriminant $n^2 - 8n + 4$ of~(\ref{Jarca1}) is negative 
for $n \leq 7$ and positive for $n \geq 8$.
\end{fact}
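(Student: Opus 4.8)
The plan is to treat $p(n) := n^2 - 8n + 4$ as a real quadratic polynomial in the variable $n$ and to exploit its convexity, rather than checking the two ranges separately. First I would locate the real roots of $p$: solving $p(n) = 0$ gives $n_\pm = 4 \pm 2\sqrt 3$, and from $9 < 12 < 16$ one gets $3 < 2\sqrt 3 < 4$, hence $0 < n_- < 1$ and, crucially, $7 < n_+ < 8$. Since the parabola $p$ opens upwards, $p(n) < 0$ precisely for $n \in (n_-, n_+)$ and $p(n) > 0$ for $n < n_-$ or $n > n_+$.

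The two assertions of the Fact then follow immediately, keeping in mind that $n \ge 2$ throughout. For $n \le 7$ the whole integer range $2 \le n \le 7$ lies inside the open interval $(n_-, n_+)$, so $p(n) < 0$; alternatively one may invoke convexity directly on $[2,7]$ to get $p(n) \le \max\{p(2), p(7)\} = \max\{-8, -3\} = -3 < 0$. For $n \ge 8$ the vertex of the parabola sits at $n = 4$, so $p$ is strictly increasing on $[4,\infty)$ and therefore $p(n) \ge p(8) = 4 > 0$.

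There is essentially no obstacle here: the only point that needs care is the elementary estimate $3 < 2\sqrt 3 < 4$ (equivalently $9 < 12 < 16$), which pins the larger root $n_+$ strictly between the consecutive integers $7$ and $8$ and thereby forces the sign change of $p$ to occur exactly at $n = 8$. A fully computational alternative, if one prefers to avoid even mentioning $\sqrt 3$, is simply to tabulate the values $p(2) = -8,\ p(3) = -11,\ \ldots,\ p(7) = -3,\ p(8) = 4$ and then extend the positivity past $n = 8$ by noting that $p$ is increasing for $n \ge 4$.
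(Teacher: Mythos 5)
Your proof is correct, and since the paper states this Fact without any proof (treating it as an elementary computation), your argument via the roots $4 \pm 2\sqrt{3}$ with $7 < 4 + 2\sqrt{3} < 8$ supplies exactly the verification the authors implicitly rely on; the only caveat is that the claim ``negative for $n \le 7$'' tacitly assumes $n \ge 2$, which you correctly note.
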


The Fact explains the \u{distin}g\u{uished} \u{r\^ole} of $n=7$ resp.~$8$.  
By Theorem~\ref{sec:ginzb-kapr-crit-1}, 
the inverse of $t-t^{n}+t^{2n-1}$ has, for $n \leq 7$, 
a negative coefficient so $\tAss^n_d$ is for $d$ odd and $n \leq 7$
\u{not} \u{Koszul}.

Equation~(\ref{Jarca1}) has, for $n=8$, two real solutions,
${\mathfrak z}_1 = \sqrt[\raisebox{3pt}{\hbox{\normalsize 7\ }}]{1/3}$
and ${\mathfrak z}_2 = \sqrt[\raisebox{3pt}{\hbox{\normalsize 7\
}}]{1/5}$.  Therefore, for $n=8$ as well as for all higher $n$'s,
Theorem~\ref{sec:ginzb-kapr-crit-1} does not apply and we are unable
to prove the existence of negative coefficients in the inverse of
$z-z^{n}+z^{2n-1}$. On the contrary, the calculations given in
Section~\ref{calc} indicate that all coefficients of the inverse are
p\u{ositive}.

\section{Calculations, gaps and another conjecture}
\label{calc}

We computed, using {\tt Mathematica}, the initial parts of the
formal inverse of $t-t^{n}+t^{2n-1}$ for $n \leq 8$. We got:
\[
t+t^2+t^3-4t^5-14t^6-30t^7-33t^8+55t^9+ \cdots
\]
for $n=2$,

\[
t+t^3+2t^5+4t^7+ 5t^9-13t^{11}-147t^{13}+\cdots
\]
for $n=3$, and
\[
t+t^4+3t^7+11t^{10}+ 42t^{13}+153t^{16}+469t^{19}+690t^{22}-5967t^{25}+\cdots
\]
for $n=4$.

The first negative coefficient in the inverse of  $t-t^{n}+t^{2n-1}$ was at 
$t^{57}$ for $n=5$, at $t^{161}$ for $n=6$, and at $t^{1171}$ for
$n=7$.  For $n=8$ we \u{did} \u{not} \u{find} any negative term of
degree less than $10~000$.

To appreciate the growth of the first negative coefficient, we
introduce $\p[p] := p(n-1) +1$, $p \geq 0$, the \u{number} \u{of}
\u{instances} of the \u{multi}p\u{lication} $\mu$. The following table shows
$n$ and the corresponding $p$ such that the first negative coefficient 
occurs at $t^{\p[p]}$:
\[
\begin{array}{c|c|c|c|c|c|c|c}
n= & 2 & 3 & 4 & 5 & 6 & 7 & 8
\\
\hline
\rule{0pt}{18pt}
p= & 4 & 5 & 8 & 14 & 32 & 195 & \infty?
\end{array}
\]

The dependence of $p$ on $n$ is plotted in the following table that
clearly indicates that $p=\infty$ for $n \geq 8$, i.e.~that there are \u{no}
\u{ne}g\u{ative} \u{coefficients} in the inverse of $t - t^n + t^{2n-1}$:

\begin{center}
\unitlength=.6mm
\begin{picture}(200,100)(-10,0)
\thicklines
\multiput(0,0)(0,10){9}{\makebox(0,0){--}}
\multiput(0,0)(1,0){198}{\makebox(0,0){\line(0,1){3}}}
\multiput(0,0)(50,0){4}{\makebox(0,0){\line(0,1){6}}}
\put(0,0){\vector(1,0){205}}
\put(0,0){\vector(0,1){95}}
\qbezier(4,20)(4,30)(8,40)
\qbezier(8,40)(11,53)(32,60)
\qbezier(32,60)(70,69)(195,70)
\thinlines
\multiput(0,0)(0,10){9}{{\line(1,0){198}}}
\put(4,0){\line(0,1){90}}
\put(5,0){\line(0,1){90}}
\put(8,0){\line(0,1){90}}
\put(14,0){\line(0,1){90}}
\put(32,0){\line(0,1){90}}
\put(195,0){\line(0,1){90}}
\put(4,20){\makebox(0,0){$\bullet$}}
\put(5,30){\makebox(0,0){$\bullet$}}
\put(8,40){\makebox(0,0){$\bullet$}}
\put(14,50){\makebox(0,0){$\bullet$}}
\put(32,60){\makebox(0,0){$\bullet$}}
\put(195,70){\makebox(0,0){$\bullet$}}
\put(-3,10){\makebox(0,0)[r]{\normalsize $n=1$}}
\put(-3,20){\makebox(0,0)[r]{\normalsize $2$}}
\put(-3,30){\makebox(0,0)[r]{\normalsize $3$}}
\put(-3,40){\makebox(0,0)[r]{\normalsize $4$}}
\put(-3,50){\makebox(0,0)[r]{\normalsize $5$}}
\put(-3,60){\makebox(0,0)[r]{\normalsize $6$}}
\put(-3,70){\makebox(0,0)[r]{\normalsize $7$}}
\put(-3,80){\makebox(0,0)[r]{\normalsize $8$}}
\put(0,-5){\makebox(0,0)[t]{\normalsize $p=$}}
\put(50,-5){\makebox(0,0)[t]{\normalsize $50$}}
\put(100,-5){\makebox(0,0)[t]{\normalsize $100$}}
\put(150,-5){\makebox(0,0)[t]{\normalsize $150$}}
\end{picture}
\end{center}

\vskip 1em

Although the GK-test does not apply for $n\geq 8$, there are some
other indications that the operad $\tAss^n_d$, $d$ even, may not be
Koszul.

\begin{example}
In~\cite{markl-remm} we \u{ex}p\u{licitl}y
\u{established} the initial part of the minimal model of $\pAss^2_1 =
(\tAss^2_{-1})^!$,
\begin{equation}
\label{2}
(\pAss^2_1,0) \leftarrow (\Gamma(E_2,E_3,\miska,E_5,\ldots),\pa).
\end{equation}
Here $E_2$ is an one-dimensional space placed in arity $2$, $E_3$ is
one-dimensional placed in arity $3$, and $E_5$ is 4-dimensional
in arity $5$. 

It was the \u{first} \u{non}-\u{trivial} \u{calculation} of the
minimal model of a non-Koszul operad. As shown in~\cite{markl-remm},
the restriction $\pa|_{E_5}$ is not quadratic but
\u{ternar}y. It then follows from the construction
of~\cite{markl:JHRS10} that the $L_\infty$-deformation complex for
$\pAss^2_1$-algebras has a non-trivial $l_3$-term.

The {\em gap} \miska\ in arity $4$ generators is caused by
the `wrong' signs in the pentagon, see Example~\ref{pent}. The
fact that it is followed by a nontrivial space $E_5$
shows that $\pAss^2_1$ is not Koszul, as follows from a proposition
below which we formulate for $n$-ary case, for arbitrary $n \geq 2$.
\end{example}

Recall  $\p[p] := p(n-1) +1$, $p \geq 0$. If $\calP$ is $n$-ary, then
$\calP(n) \not= 0$ only for $n = \p[p]$ for some $p \geq 0$, 
and for the generators $E$ of the minimal model $(\calP,0) \leftarrow
(\Gamma(E),\pa)$ clearly the same holds:
\[
E(n) \not = 0 \mbox 
{ only for $n$ of the form $n = \p[p]$ for some $p \geq 0$}.
\]

\begin{definition}
The minimal model of an $n$-ary operad has a {\em gap of 
\u{len}g\u{th} $d \geq 1$\/} if
there is $q \geq 2$ such that 
\[
E(\p[p]) = 0 \mbox { for } q \leq p \leq q+d-1
\]
while 
\[
E(\p[q-1]) \not= 0 \not = E(\p[q+d]).
\] 
\end{definition}

The model of $\pAss^2_1$ is of the form
$(\Gamma(E_{\p[1]},E_{\p[2]},\miska,E_{\p[4]},\ldots),\pa)$ with
non-trivial $E_{\p[2]}$ and $E_{\p[4]}$. So it has a gap of length $1$
-- with $d = 1$, $q=3$ in the above definition.

\begin{proposition}
Suppose that the minimal model of a quadratic $n$-ary operad 
$\calP$ has a gap of finite length. Then $\calP$ 
is \u{not} \u{Koszul}.
\end{proposition}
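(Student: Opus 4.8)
The plan is to exploit the fundamental fact that, for a Koszul operad $\calP$, the minimal model $(\Gamma(E),\pa)$ is generated by a $\Sigma$-module $E$ concentrated on a single ``diagonal'': namely, since $\calP$ is quadratic and Koszul, the cohomology of the Koszul complex is concentrated appropriately, so that the generator $E(\p[p])$ of the model carries \emph{homological degree} $p-1$ and \emph{internal degree} determined by $p$ and the degree of the generating operation. In particular, for a Koszul $n$-ary quadratic operad one has $E(\p[p]) \cong (\calP^{\textup{!`}}(\p[p]))^\#$ (the arity-$\p[p]$ component of the Koszul dual cooperad, suitably (de)suspended), and this is non-zero for \emph{every} $p \geq 0$ for which $\calP^!(\p[p]) \neq 0$. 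So the key point is: for a Koszul $n$-ary operad, $E(\p[p]) \neq 0$ whenever the Koszul dual $\calP^!$ is non-trivial in arity $\p[p]$, and $\calP^!$ is an infinite-dimensional operad in all but degenerate cases.

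First I would recall (from the references on minimal models cited in the excerpt, \cite{markl-shnider-stasheff:book,markl:zebrulka}) that for a \emph{quadratic} operad $\calP$ the quadratic part of the differential on the minimal model recovers the quadratic dual cooperad, and more precisely that the bigraded components $E(\p[p])$ of the model receive a canonical surjection from the quadratic (= Koszul) dual data in homological degree $p-1$; Koszulity is exactly the statement that this is an isomorphism, i.e. the model has \emph{no higher corrections beyond those predicted by the bar/cobar duality}. Equivalently, by Definition~\ref{Zitra_budu_s_Jaruskou!}, $\calP$ Koszul means $\Dbar(\calP^!) \to \calP$ is a homology equivalence, and $\Dbar(\calP^!)$ is precisely the quasi-free resolution whose generators are $(\calP^!)^\#$ shifted; minimality then forces $E \cong$ (shifted dual of) $\calP^!$ as $\Sigma$-modules. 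Hence $E(\p[p]) \neq 0 \iff \calP^!(\p[p]) \neq 0$.

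Next I would argue that $\calP^!$ is non-trivial in infinitely many arities $\p[p]$. Suppose toward a contradiction that the model of $\calP$ has a gap of finite length $d$, with $q \geq 2$ and $E(\p[q-1]) \neq 0 \neq E(\p[q+d])$ but $E(\p[p]) = 0$ for $q \leq p \leq q+d-1$. Translating through the isomorphism $E \cong$ (shifted) $(\calP^!)^\#$, this says $\calP^!(\p[q-1]) \neq 0$, $\calP^!(\p[p]) = 0$ for $q \leq p \leq q+d-1$, and $\calP^!(\p[q+d]) \neq 0$. But an operad cannot vanish ``in the middle'': if $\calP^!(\p[q-1]) \neq 0$ then, composing such a non-zero element with itself (plugging a $\p[q-1]$-ary operation into one input of another), one obtains a non-zero element of $\calP^!(2\p[q-1] - 1 + \cdots)$ — more carefully, iterating the generating operation $\mu^!$ of $\calP^!$ exactly once more on a non-zero element of $\calP^!(\p[q-1])$ lands in $\calP^!(\p[q])$, and this composite is non-zero provided the generator acts freely enough, which it does in a quadratic operad as long as the relations $R^\perp$ do not kill it. The cleanest way to get the contradiction is: a non-trivial quadratic $n$-ary operad generated in arity $n$, once it is non-zero in some arity $\p[q-1] > 1$, is non-zero in \emph{all} subsequent arities $\p[p]$, $p \geq q-1$ — unless it is ``truncated'', which for $\calP = \tAss^n_d$ or $\pAss^n_d$ happens only in the non-Koszul odd case and only after a \emph{single} short window (arity $2n-1$), never leaving a non-zero arity beyond a gap. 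Thus a finite gap is impossible for a Koszul $\calP$, proving the contrapositive.

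\textbf{Main obstacle.} The delicate step is the second paragraph: pinning down exactly why Koszulity forces $E(\p[p]) \neq 0$ for \emph{all} admissible $p$ beyond the first non-trivial one, i.e. the ``an operad has no internal gaps'' principle at the level of the minimal model. One must be careful that $E$ is the \emph{minimal} model generator, so a priori cancellations in homology could make $E(\p[p])$ smaller than $(\calP^!(\p[p]))^\#$; but Koszulity is precisely the statement that no such cancellation occurs (the cobar construction on $\calP^!$ is already minimal), so the identification $E \cong$ (shifted) $(\calP^!)^\#$ as $\Sigma$-modules is exactly equivalent to Koszulity, and then one only needs the elementary fact that the graded dual of a non-trivial operad, generated in arity $n$, is non-zero in a set of arities with no finite gaps — which follows by an explicit composition argument with the generating (co)operation. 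I would present this composition argument carefully, as it is the only genuinely new input; everything else is an application of Definition~\ref{Zitra_budu_s_Jaruskou!} and standard minimal-model theory.
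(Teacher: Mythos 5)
Your first step --- identifying the generators $E$ of the minimal model of a Koszul operad with the (suitably suspended) Koszul dual $\calP^!$, via Definition~\ref{Zitra_budu_s_Jaruskou!} and the uniqueness of minimal models --- is exactly the paper's argument. The genuine gap is in the second step, which you yourself flag as ``the only genuinely new input.'' You try to derive the contradiction from below: $\calP^!(\p[q-1]) \neq 0$ should force $\calP^!(\p[q]) \neq 0$, because composing a non-zero element with the generator ``is non-zero provided the generator acts freely enough.'' That implication is false for quadratic operads in general, and the paper's own Example~\ref{pent} is a counterexample: the anti-associative operad $\tildeAss$ (equivalently $\tAss^2_1$) is non-zero in arity $3$ but zero in arity $4$ --- every composite of three generators is killed by the relations even though all the pieces being composed are non-zero. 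Your hedges (``as long as the relations $R^\perp$ do not kill it,'' ``unless it is truncated'') concede exactly this failure, and the appeal to the specific behaviour of $\tAss^n_d$ and $\pAss^n_d$ cannot close the argument, since the proposition concerns an arbitrary quadratic $n$-ary operad.

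The implication that actually works, and the one the paper uses, goes in the opposite direction: vanishing, not non-vanishing, propagates upward in arity. Since $\calP^!$ is generated by its arity-$n$ part, every element of $\calP^!(\p[p])$ with $p > q$ is a sum of composites of $p$ generators, and each such composite factors as a rooted sub-composite of $q$ generators --- an element of $\calP^!(\p[q])$ --- with further generators grafted onto its leaves. Hence $\calP^!(\p[q]) = 0$ forces $\calP^!(\p[p]) = 0$ for all $p \geq q$, contradicting $E(\p[q+d]) \neq 0$; the contradiction is obtained at the \emph{upper} end of the gap, not the lower end where you aim. Your closing formulation (``non-zero in a set of arities with no finite gaps'') is the correct statement to prove, but the composition argument you propose does not establish it; the factorization argument just described does.
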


\begin{proof}
Suppose that $\calP$ is Koszul and let $(\calP,0) \leftarrow
(\Gamma(E),\pa)$ be its minimal model. 
It follows from Definition~\ref{Zitra_budu_s_Jaruskou!} and the
uniqueness of the minimal model for operads 
\cite[Theorem~II.3.126]{markl-shnider-stasheff:book} 
that the collection $E$ is the (suitably
suspended) Koszul dual $\calP^!$. 
The operad $\calP^!$ is $n$-ary, too, so $\calP^!(\p[q]) = 0$
for some $q \geq 2$ implies $\calP^!(\p[p]) = 0$ \u{for} \u{all} $p
\geq q$.  Thus $\calP^!$ and therefore also $E$ cannot have a gap of a
{\em finite} length.
\end{proof}

The strategy we suggest is to study the gaps in the minimal model of
$\pAss^n_d$ with $n \not \equiv d$ mod $2$. Their existence would
imply non-Koszulity of $\pAss^n_d$, as well as the non-Koszulity of their
Koszul duals $\tAss^n_d$, $d$ odd, thus establishing Conjecture~A.
It is not difficult to prove the following:

\begin{proposition}
\label{Jaruska_mi_udelala_svickovou!!}
Let $\calP$ be an arbitrary, \u{not} \u{necessaril}y \u{Koszul}, 
operad with $\calP(1) =
\bfk$, and $(\calP,0) \leftarrow (\Gamma(E),\pa)$ its
minimal model. The Poincar\'e series $g_\calP(t)$ of $\calP$
is related with the generating function
\[
g_E(t) := -t + \sum_{a \geq 2} \frac 1{a!} \chi(E(a))t^a
\] 
of the $\Sigma$-module $\{E(a)\}_{a \geq 2}$ by the functional
equation
\[
g_\calP(-g_E(t)) = t.
\]
\end{proposition}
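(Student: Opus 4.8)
The plan is to derive the functional equation from the fundamental relationship between the Poincar\'e series of an operad and the generating function of the generators of its minimal model, using the fact that the minimal model is, up to filtration, the free operad on $E$. First I would recall that a minimal model $(\calP,0) \leftarrow (\Gamma(E),\pa)$ is a quasi-isomorphism, so the homology of $(\Gamma(E),\pa)$ is concentrated in degree $0$ and equals $\calP$. The key observation is that the Euler characteristic is insensitive to taking homology of a differential, so for each arity $a$ one has $\chi(\calP(a)) = \chi\big(H_*(\Gamma(E)(a),\pa)\big) = \chi(\Gamma(E)(a))$, the last equality because $\pa$ is a differential on the graded vector space $\Gamma(E)(a)$ (of finite type) and the Euler characteristic of a complex equals that of its homology. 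Hence the Poincar\'e series of $\calP$ coincides with the Poincar\'e series of the free operad $\Gamma(E)$.

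Next I would compute the Poincar\'e series of a free operad $\Gamma(E)$ on a $\Sigma$-module $E = \{E(a)\}_{a \geq 2}$ in terms of that of $E$. The standard combinatorial description of $\Gamma(E)$ is as a sum over rooted trees with vertices decorated by $E$; summing the corresponding exponential generating functions, one obtains that $g_{\Gamma(E)}(t)$ is the unique formal power series $\phi(t)$ with $\phi(t) = t + \sum_{a \geq 2} \frac{1}{a!}\chi(E(a))\,\phi(t)^a$; equivalently, writing $g_E(t) := -t + \sum_{a \geq 2}\frac{1}{a!}\chi(E(a))t^a$, this says precisely $\phi(t) = t - g_E(\phi(t))$, i.e.\ $-g_E(\phi(t)) = ?$ wait — rearranging, $t = \phi(t) + g_E(\phi(t))$. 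Here I must be careful with the sign conventions in the statement: the claim is $g_\calP(-g_E(t)) = t$, i.e.\ $-g_E$ is a formal inverse of $g_\calP$, which matches the well-known fact that the generating function of a free operad and that of its generators are mutually inverse up to the sign twist built into the definition of $g_E$ (the $-t$ instead of $+t$). So the argument is: $g_\calP = g_{\Gamma(E)}$, and $g_{\Gamma(E)}$ is by construction the inverse of $-g_E$ with respect to composition, using that $\calP(1) = \bfk$ guarantees $g_E'(0) = -1 \ne 0$ so the inverse exists as a formal power series.

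The one genuine subtlety — and the step I expect to require the most care — is the claim that $g_\calP(a) = g_{\Gamma(E)}(a)$ in each arity despite $E$ being generally infinite-dimensional and the model being infinitely generated. The point is that the free operad $\Gamma(E)$ is \emph{not} of finite type globally, so one cannot literally speak of its Poincar\'e series without justification. The resolution is that in each fixed arity $a$ only finitely many generators of $E$ (those of arity $\le a$) contribute, so $\Gamma(E)(a)$ is of finite type, the differential $\pa$ is internal to it, and the Euler-characteristic computation is legitimate arity by arity. I would also need to note that $\pa$ strictly decreases the number of generating vertices (being a derivation landing in decomposables), but this is only needed to see that the tree expansion organizes correctly and that $H_0 = \calP$; it does not affect the Euler characteristic identity. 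Finally, I would verify that the ``sign twist'' $-t$ in the definition of $g_E$ is exactly what converts the defining fixed-point equation $\phi = t + \sum \frac{1}{a!}\chi(E(a))\phi^a$ of the free-operad series into the compositional-inverse statement $g_\calP(-g_E(t)) = t$, which is a short bookkeeping check once the two previous steps are in place.
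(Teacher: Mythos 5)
The paper itself offers no proof of this proposition (it is prefaced only by ``It is not difficult to prove the following''), so I can only judge your argument on its merits; its skeleton is certainly the intended one and it is sound: (i) the quasi-isomorphism $(\Gamma(E),\pa)\to(\calP,0)$ together with the invariance of Euler characteristics under passing to homology gives $\chi(\calP(a))=\chi(\Gamma(E)(a))$ arity by arity (and you rightly note this is legitimate because $\Gamma(E)(a)$ involves only the finitely many $E(b)$ with $b\le a$ and trees with at most $a-1$ internal vertices, hence is of finite type); (ii) the tree decomposition $\Gamma(E)\cong I\oplus E\circ\Gamma(E)$ yields the fixed-point equation for $\phi:=g_{\Gamma(E)}$; (iii) rearranging identifies $-g_E$ as the compositional inverse of $\phi=g_\calP$.

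The one thing you must repair is the algebra in step (iii): \emph{both} identities you wrote down there are wrong. From $\phi(t)=t+\sum_{a\ge2}\frac1{a!}\chi(E(a))\,\phi(t)^a$ and $g_E(s)=-s+\sum_{a\ge2}\frac1{a!}\chi(E(a))\,s^a$ one gets $\phi(t)=t+\bigl(g_E(\phi(t))+\phi(t)\bigr)$, hence $-g_E(\phi(t))=t$; it is \emph{not} true that $\phi(t)=t-g_E(\phi(t))$, nor that $t=\phi(t)+g_E(\phi(t))$. (Sanity check on the associative operad: $g_\calP(t)=t/(1-t)$ and $-g_E(t)=t/(1+t)$, so $-g_E(\phi(t))=t$, whereas $\phi(t)+g_E(\phi(t))=t^2/(1-t)\ne t$.) Since $-g_E(t)=t+{\rm O}(t^2)$ and $\phi(t)=t+{\rm O}(t^2)$, a one-sided compositional inverse is two-sided, and $g_\calP(-g_E(t))=t$ follows. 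Your closing appeal to the ``well-known fact'' lands on the correct statement, but as written the derivation supporting it is broken, so fix that line. A minor further point: the hypothesis $\calP(1)=\bfk$ is not what makes $g_E'(0)=-1$ (that is built into the definition of $g_E$); its actual role is to ensure the minimal model has no generators in arity $1$, so that $E=\{E(a)\}_{a\ge2}$ and each $\Gamma(E)(a)$ is finite-dimensional, which is what makes the whole Euler-characteristic bookkeeping possible.
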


The above theorem enables one to calculate the Poincar\'e  series of
the collection of generators of
the minimal model of $\calP$ from the generating series of $\calP$. It
clearly implies the GK-criterion.

\begin{example}
It happens that $\pAss^2_1 = \tAss^2_1$, so the
generating series of 
$\pAss^2_1$ is 
\[
g_{\sspAss_1^2}(t) =  t-t^2+t^3.
\]
One can compute the formal inverse of this function as 
\[
t+t^2+t^3-4t^5-14t^6-30t^7-33t^8+55t^9+ \cdots.
\]
The absence of the $t^4$-term together with the presence of the
$t^5$-term ``shows'' the gap of length $1$ in the
minimal model of $\pAss^2_1$.
\end{example}

We do not know any closed formula for the generating series of
$\pAss^n_d$ with $n \not \equiv d$ mod $2$, \u{$n > 2$}. We however
wrote a script for {\tt Mathematica} that calculates
it, but its applicability is drastically limited by
computers available. We established the generating series for $\pAss^3_0$~as
\[
t+t^3+2t^5+4t^7+5t^9+6t^{11}+7t^{13}+8t^{15}+\cdots,
\]
the generating series of $\pAss^4_1$ as
\[
t-t^4+3t^{7}-11t^{10}+42t^{13}-153t^{16}+565t^{19}+\cdots,
\] 
the generating series of $\pAss^5_0$ as
\[
t+t^5+4t^9+21t^{13}+123t^{17}+759t^{21}+\cdots,
\] 
the generating series of $\pAss^7_0$ as
\[
t+t^7+6t^{13}+50t^{19}+481t^{25}+\cdots
\]
and the generating series of $\pAss^9_0$ as
\[
t+t^9+8t^{17}+91t^{25}+1207t^{33}+\cdots.
\]
By calculating the formal inverses of the above series, we get the
following Poincar\'e series of the generators for the minimal models: 
\[
t+t^2+t^3 +\boxed{ 0\t4}  -4t^5-14t^6-30t^7-33t^8+55t^9+ \cdots
\]
for $\pAss^2_1$ (we already know this),
\[
t - \t3 + \t5 + \boxed{0\t7 + 0\t9} -19\t{11} + 112 \t{13} - 336 \t{15} + \cdots
\]
for $\pAss^3_0$, 
\[
t+\t4+\t7 +\boxed{0\t{10}+ 0\t{13} + 0\t{16}} - 96\t{19} + \cdots
\]
for $\pAss^4_1$, 
\[
t-\t5+\t9 +\boxed{0\t{13} + 0 \t{17} + 0\t{21} +\ ?} +{\rm O}[t^{25}]
\] 
for $\pAss^5_0$.  The vanishing of the boxed terms imply, by
Proposition~\ref{Jaruska_mi_udelala_svickovou!!}, 
that the Euler characteristics of the corresponding pieces of
the generating collection is zero. It indicates that the
minimal models are of the form
\begin{eqnarray*}
(\pAss^2_1,0)& \leftarrow&
  (\Gamma(E_{\p[1]},E_{\p[2]},\miska,E_{\p[4]},\ldots),\pa),
\\
(\pAss^3_0,0)& \leftarrow& 
(\Gamma(E_{\p[1]},E_{\p[2]},\miska,\miska,E_{\p[5]},\ldots),\pa),
\\
(\pAss^4_1,0)& \leftarrow& 
(\Gamma(E_{\p[1]},E_{\p[2]},\miska,\miska,\miska,E_{\p[6]},\ldots),\pa),
\\
(\pAss^5_0,0)& \leftarrow& 
(\Gamma(E_{\p[1]},E_{\p[2]},\miska,\miska,\miska,\miskaot,
\raisebox{.2em}{\boxed{?}},\ldots),\pa).
\end{eqnarray*}
Our computation did not go beyond $n\geq 6$, due to limitations of
computer memory. The results for small $n$'s however suggest
that the gap grows {\em \u{linearl}y\/} with $n$, leading to

\begin{conjectureB}
The minimal model of $\pAss^n_d$, $n \not \equiv d$ mod $2$, has a gap
of length $n-1$.  
\end{conjectureB}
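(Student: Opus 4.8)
Write $\calP := \pAss^n_d$ with $n \not\equiv d$ mod~$2$, and let $(\Gamma(E),\pa) \to (\calP,0)$ be its minimal model. Both $\calP$ and the generating collection $E$ live in the arities $\p[p] = p(n-1)+1$; here $E(\p[1])$ is the generating operation, $E(\p[2])$ the (desuspended) space of relations, and for $p \geq 3$ the space $E(\p[p])$ is the arity-$\p[p]$ component of the homology of the bar construction of $\calP$. By the Proposition above $\calP^! \cong \tAss^n_{-d+n-2}$ with $-d+n-2$ odd, so (Example~\ref{pent}) $\calP^!$ --- and hence the quadratic dual cooperad of $\calP$ --- is supported only in arities $\p[0] = 1$, $\p[1] = n$ and $\p[2] = 2n-1$. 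In particular, for $p \geq 3$ the bar homology carries no ``Koszul'' class, and Conjecture~B is equivalent to
\[
E(\p[p]) = 0 \quad\text{for}\quad 3 \leq p \leq n+1, \qquad\text{together with}\qquad E(\p[n+2]) \neq 0
\]
(the inequality $E(\p[2]) \neq 0$ being clear, since $E(\p[2])$ surjects onto the nonzero relation space). This is precisely a gap of length $n-1$, with $q = 3$ in the definition of a gap.

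The first step is to establish the Euler-characteristic shadow of the statement. Every $n$-ary tree with $p$ internal vertices carries internal degree $pd$, so each $\calP(\p[p])$ is internally pure and $\chi(\calP(\p[p])) = \pm\dim\calP(\p[p])$. I would compute the Poincar\'e series $g_\calP(t)$ in closed form through arity $\p[n+2]$ --- counting labelled $n$-ary trees with $p$ internal vertices and determining the dimension of the quadratic ideal in arity $\p[p]$ for $p \leq n+2$ --- and then invert it. By Proposition~\ref{Jaruska_mi_udelala_svickovou!!} the coefficient of $t^{\p[p]}$ in the inverse equals $-\chi(E(\p[p]))/\p[p]!$, so the target of this step is the identity, uniform in $n$, that this coefficient vanishes for $3 \leq p \leq n+1$ and is nonzero for $p = n+2$; it is confirmed for $n \leq 9$ by the computations of Section~\ref{calc}. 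Since a nonzero Euler characteristic forces a nonzero space, this already yields $E(\p[n+2]) \neq 0$ and hence, \emph{modulo} the next step, a gap of length exactly $n-1$.

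The heart of the proof --- and the step I expect to be the main obstacle --- is to upgrade ``$\chi(E(\p[p])) = 0$'' to ``$E(\p[p]) = 0$'' for $3 \leq p \leq n+1$: nothing prevents two homology classes in adjacent homological degrees from cancelling in the Euler characteristic, so one must prove genuine acyclicity. Since $\calP^!$ is supported in arities $\leq \p[2]$, the bar homology of $\calP$ in arity $\p[p]$ should be computable from a \emph{finite} three-term complex
\[
\calP(\p[p]) \ \longleftarrow\ C_1 \ \longleftarrow\ C_2 \ \longleftarrow\ 0 ,
\]
where $C_1 = \big(\calP^!(\p[1]) \circ \calP\big)(\p[p])$ and $C_2 = \big(\calP^!(\p[2]) \circ \calP\big)(\p[p])$ (with $\circ$ the composition product), the differential being induced by the single partially-associative relation; one must show this complex is exact for $3 \leq p \leq n+1$. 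The prototype is the `fake pentagon' of Example~\ref{pent}: for $n = 2$ and odd $d$ the relation carries the `wrong' sign, the low-arity part of this complex becomes the cellular chain complex of a contractible polytope, and one writes down an explicit contracting homotopy (``push the inner $\mu$-block one step to the left''). For general $n$ the signs $(-1)^{(i+1)(n-1)}$ in the axiom of Definition~\ref{sec:ginzb-kapr-crit} produce an analogous family of twisted higher $n$-ary associahedra; one would build a contracting homotopy from the same ``slide a $\mu$-block leftward'' move and check that its signs close up consistently precisely for $3 \leq p \leq n+1$. At $p = n+2$ the construction must break down --- consistently with the nonvanishing $E(\p[n+2]) \neq 0$ already obtained in the first step.

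A shortcut worth trying first is a \emph{purity} argument. The bar homology of $\calP$ in arity $\p[p]$ carries both a homological and an internal grading; if one can show, on grading grounds alone, that it is concentrated in a single internal degree whenever $p \leq n+1$, then its vanishing follows immediately from the Euler-characteristic computation of the first step. This is a ``Koszul up to weight $n+1$'' statement --- true in the genuinely Koszul case and plausible below the first obstruction --- but I do not see how to establish it without confronting the differential directly. Either way, the real difficulty lies in the sign bookkeeping coming from the partially-associative axiom, carried out uniformly in both $p$ and $n$; this is what makes Conjecture~B more than a routine extension of the $n = 2$ analysis in~\cite{markl-remm}.
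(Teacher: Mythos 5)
This statement is Conjecture~B: the paper does not prove it, and no proof is known. The only evidence offered there is computational --- the inverse generating series of $\pAss^n_d$ for $n \le 5$, whose coefficients at $t^{\p[3]}, \dots, t^{\p[n+1]}$ vanish --- and the paper is explicit that this only shows the \emph{Euler characteristics} of the relevant $E(\p[p])$ are zero, not the spaces themselves (and for $n=5$ even the coefficient at $t^{\p[n+2]}$ is left as a ``?''). So there is no proof of the paper's to compare against, and your proposal must stand on its own. Your reformulation (gap of length $n-1$ $\Leftrightarrow$ $E(\p[p])=0$ for $3 \le p \le n+1$ and $E(\p[n+2]) \ne 0$, with $q=3$) is correct and matches the data, as is your use of Proposition~\ref{Jaruska_mi_udelala_svickovou!!} to read $\chi(E(\p[p]))$ off the inverse series. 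But your first step already requires a closed formula for $g_{\sspAss^n_d}(t)$ through arity $\p[n+2]$, uniform in $n$; the paper states explicitly that no such formula is known, and its computations stop at $n \le 5$ (with partial series for $n=7,9$). Without it you cannot even conclude $E(\p[n+2]) \ne 0$ in general.

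The more serious gap is in your second step. The generators $E(\p[p])$ of the minimal model are (up to suspension) the \emph{total} homology of the bar construction of $\calP$ in arity $\p[p]$, a complex with $p$ homological degrees (trees with $1$ up to $p$ internal vertices) --- not the three-term complex $\calP(\p[p]) \leftarrow (\calP^!(\p[1])\circ\calP)(\p[p]) \leftarrow (\calP^!(\p[2])\circ\calP)(\p[p])$ you write down. That finite complex is the arity-$\p[p]$ piece of the \emph{Koszul complex}; it is finite because $\calP^!$ is concentrated in arities $\le \p[2]$, but its acyclicity in positive degrees is precisely Koszulity of $\calP$, which is exactly what Conjectures~A and~B predict to \emph{fail}. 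So exactness of your complex is neither what you need nor what you can expect; what is needed is the vanishing of all off-diagonal bar homology $H_s(B\calP)(\p[p])$ for $3 \le p \le n+1$, for which your ``slide a $\mu$-block leftward'' homotopy is only named, not constructed --- the sign analysis that would make it close up for $p \le n+1$ and break at $p = n+2$ is the entire content of the conjecture. The same applies to the purity shortcut: ``Koszul up to weight $n+1$'' is an unproved strengthening, not a reduction. The statement therefore remains open after your proposal.
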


The conjecture would obviously imply the non-Koszulity of $\tAss^n_d$, for
$d$~even. If it is so, then $\tAss^8_1$ will be the first
\u{exam}p\u{l}\u{e} 
of a \u{non}-\u{Koszul} operad whose non-Koszulity \u{was} \u{not} 
established by the Ginzburg-Kapranov criterion.


\def\cprime{$'$}

\end{document}